\newtheorem{theorem}{Theorem}[section]
\newtheorem{fact}[theorem]{Fact}
\newtheorem{question}[theorem]{Question}
\newtheorem{proposition}[theorem]{proposition}
\newtheorem{corollary}[theorem]{Corollary}
\newtheorem{remark}[theorem]{Remark}
\newtheorem{lemma}[theorem]{Lemma}
\theoremstyle{definition}
\newtheorem{definition}[theorem]{Definition}
\newtheorem{notation}[theorem]{Notation}
\newcommand{\bQ}{\mathbb{Q}}
\newcommand{\iLs}{$ \mathcal{L} \ $} 
\newcommand{\iMs}{$ \mathcal{M} \ $} 
\newcommand{\iNs}{$ \mathcal{N} \ $}
\newcommand{\iL}{$ \mathcal{L} $} 
\newcommand{\iM}{$ \mathcal{M} $} 
\newcommand{\iN}{$ \mathcal{N} $}
\newcommand{\La}{\mathcal{L}} 
\newcommand{\M}{\mathcal{M}} 
\newcommand{\N}{\mathcal{N}}
\newcommand{\embedS}{\lesssim_{s}}
\newcommand{\sims}{\sim_{s}}
\newcommand{\embedArrow}{\hookrightarrow}
\newcommand{\G}{\Gamma}
\newcommand{\Ga}{\Gamma^*}
\newcommand{\iG}{$\Gamma$}
\newcommand{\iGa}{$ \Gamma^*$}
\newcommand{\iGs}{$ \Gamma\ $}
\newcommand{\iGas}{$ \Gamma^*\ $}
\newcommand{\cont}{2^{\aleph_0}}
\newcommand{\Q}{\mathbb{Q}}
\newcommand{\cM}{\mathcal{M}}
\newcommand{\cN}{\mathcal{N}}
\newcommand{\cL}{\mathcal{L}}
\newcommand{\cC}{\mathcal{C}}
\newcommand{\cG}{\mathcal{G}}
\newcommand{\twopartdef}[4]
{
	\left\{
		\begin{array}{ll}
			#1 & \mbox{if } #2 \\
			#3 & \mbox{if } #4
		\end{array}
	\right.
}
\DeclareMathOperator{\age}{age}
\begin{document}

\pagespan{3}{}
\keywords{Indivisibility, Symmetric indivisibility, Colouring, Automorphism.}
\subjclass[2010]{05C15, 05C55, 03C07, 20B27}

\title{Many symmetrically indivisible structures}

\author[N. Meir]{Nadav Meir}\email{mein@math.bgu.ac.il}
\address{Department of Mathematics, Ben-Gurion University of the Negev, P.O.B. 653, Be'er Sheva 8410501, ISRAEL}




\begin{abstract}
A structure $\cM$ in a first-order language $\cL$ is \emph{indivisible} if for every colouring of its universe $M$ in two colours, there is a monochromatic $\M^{\prime} \subseteq \M$ such that $\M^{\prime}\cong\cM$. Additionally, we say that $\mathcal{M}$ is \emph{symmetrically indivisible} if $\mathcal{M}^{\prime}$ can be chosen to be \emph{symmetrically embedded} in $\mathcal{M}$ (that is, every automorphism of $\mathcal{M}^{\prime}$ can be extended to an automorphism of $\mathcal{M}$). In the following paper we give a general method for constructing new symmetrically indivisible structures out of existing ones. Using this method, we construct $2^{\aleph_0}$ many non-isomorphic symmetrically indivisible countable structures in given (elementary) classes and answer negatively the following question from \cite{HKO11}: Let \iMs be a symmetrically indivisible structure in a language
\iLs. Let $\La_0 \subseteq \La$. Is $ \M \upharpoonright \La_0$
symmetrically indivisible?
\end{abstract}


\maketitle



\section{Introduction}
The notion of indivisibility of relational first-order structures and metric spaces is well studied
in Ramsey theory (\cite{KoRo86}, \cite{ElSa91},\cite{ElSa93} are just a few examples of the extensive study in this area). Recall that a
structure $\cM$ in a relational first-order language is indivisible, if for every colouring of its universe $M$ in two colours, there is a monochromatic substructure $\cM^{\prime} \subseteq \cM$ such that $\cM^{\prime} \cong \cM$. Rado's random graph, the ordered set of
natural numbers and the ordered set of rational numbers are just a few of the many
examples. Weakenings of this notion  have also been studied (see \cite{Sa14}). A known extensively studied strengthening of this notion is the pigeonhole property. A first-order relational structure $X$ admits the pigeonhole property if whenever $X$ is the union of two disjoint substructures $Y$ and $Z$, at least one of $Y$ and $Z$ is isomorphic to $X$. Examples of such structures include the random graph and the random $n$-hypergraph, though in general such structures are very rare. (See \cite{Ca10} for further reading.)
For an extensive review on the subject --- see appendix A in \cite{Fr00}. 

In \cite{GeKo11}, a notion of symmetrized Ramsey theory was introduced, and in \cite{HKO11} a new strengthening of the notion of indivisibility was investigated: We say that a substructure $\cN \subseteq \cM$ is \emph{symmetrically embedded} in \iMs if every automorphism of \iNs extends to an automorphism of $\cM$. We say that \iMs is \emph{symmetrically indivisible} if for every colouring of $M$ in two colours, there is a monochromatic $\M^{\prime} \subseteq \M$ such that $\M^{\prime}$ is isomorphic to \iMs and $\M^{\prime}$ is symmetrically embedded in $\cM$.

In \cite{HKO11}, several examples of symmetrically indivisible structures were  introduced. Examples include the random graph (\cite{GeKo11}), the ordered rational numbers, the ordered natural numbers, the universal $n$-hypergraph.

 In Section \ref{many} we will present a method of constructing new symmetrically indivisible structures out of existing ones and using this method, construct $2^{\aleph_0}$ many non-isomorphic symmetrically indivisible countable linear orders and $2^{\aleph_0}$ many non-isomorphic symmetrically indivisible countable graphs. We give a sufficient conditions for a class of $\cL$-structures to have $2^{\aleph_0}$ many symmetrically indivisible structures. We note that these conditions are met by the class of $n$-hypergraphs, graphs edge-coloured in $k\leq \omega$ colours, and trivially by the class of partial orders as a super-class of linear orders and with an aim of finding a general claim. 

In Section \ref{sec3} we make further use of this method to construct an example that answers negatively a question asked in \cite{HKO11}: Let \iMs be a symmetrically indivisible structure in a language
\iLs. Let $\La_0 \subseteq \La$. Is $ \M \upharpoonright \La_0$
symmetrically indivisible?
It is clear that if $\cM$ is indivisible then $\cM\upharpoonright \cL_0$ is indivisible, but for a symmetrically embedded $\cM_0 \subseteq \cM$, $\cM_0 \upharpoonright \cL_0$ is not necessarily symmetrically embedded in $\cM$, thus this question does not seem to have an immediate answer.
\section{Many examples for symmetrically indivisible structures}\label{many}
In this section we will show how to construct many symmetrically indivisible structures based on existing ones.

\begin{definition}
	For two first-order structures $\cM$, $\cN$ an embedding $e:\cN\to \cM$ is called \emph{symmetric} if every automorphism of $e[\cN]$ extends to an automorphism of $\cM$. So a substructure $\cN\subset \cM$ is symmetrically embedded if the inclusion map $\iota$ is symmetric.
\end{definition}

\begin{notation}
	Let \iL\ be a first-order language and let \iM , \iN\ be \iL -structures.
We write $\M \lesssim \N $ if there is an embedding $ e:\M \embedArrow \N $. In the same fashion we write $\M \lesssim_{s} \N $ if there is a \emph{symmetric} embedding $ e:\M \embedArrow \N $.

Similarly, we write  $\M \subseteq \N$ if $\M$ is a  substructure of $\N$ and $\cM \subseteq_s \cN$ if there is a \emph{symmetric} embedding $e:\cM\embedArrow \cN$.

Finally, $\M \sim \N $ means that both $ \M \lesssim \N $ and $ \N \lesssim \M $ hold, and $\cM \sim_s \cN$ means that both $ \M \lesssim_s \N $ and $ \N \lesssim_s \M $ hold.
\end{notation}

\begin{proposition}\label{equivResult}\ 
\begin{enumerate}
\item If \iMs and \iNs are \iL-structures such that $ \M \sim \N $ then
 \iMs is  indivisible iff \iNs is  indivisible.
\item
If \iMs and \iNs are \iL-structures such that $ \M \sim_{s} \N $ then
 \iMs is \emph{symmetrically  } indivisible iff \iNs is \emph{symmetrically} indivisible.
\end{enumerate}
	
\end{proposition}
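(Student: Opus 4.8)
The plan is to prove both parts with a single ``sandwich and transport'' template, reducing each biconditional to one implication since the hypotheses $\cM \sim \cN$ and $\cM \sims \cN$ are symmetric in $\cM$ and $\cN$. For part (1), assume $\cM$ is indivisible and fix embeddings $f\colon \cM \embedArrow \cN$ and $g\colon \cN \embedArrow \cM$ witnessing $\cM \sim \cN$. Given a two-coloring $c\colon N \to \{0,1\}$, I would pull it back along $f$ to the coloring $c' = c \circ f$ of $M$, apply indivisibility of $\cM$ to obtain a monochromatic $\cM' \subseteq \cM$ with $\cM' \cong \cM$, and observe that $f[\cM']$ is then a $c$-monochromatic substructure of $\cN$ isomorphic to $\cM$. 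Finally, composing $g$ with an isomorphism $h\colon \cM \to f[\cM']$ yields an embedding $h \circ g$ of $\cN$ into $f[\cM']$, whose image is a $c$-monochromatic copy of $\cN$ inside $\cN$; hence $\cN$ is indivisible, and swapping the roles of $\cM$ and $\cN$ gives the converse.

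For part (2) the same four moves are made, with $f,g$ now \emph{symmetric} embeddings witnessing $\cM \sims \cN$, and with $\cM'$ chosen (by symmetric indivisibility of $\cM$) to be not only monochromatic and isomorphic to $\cM$ but also symmetrically embedded in $\cM$. The extra content is to check that symmetric embeddedness survives each transport step, so that the final copy $h[g[\cN]]$ is symmetrically embedded in $\cN$. For this I would first isolate two auxiliary facts: (a) symmetric embeddedness is transitive, i.e.\ if $A \subseteq_s B$ and $B \subseteq_s C$ then $A \subseteq_s C$ (extend an automorphism of $A$ first to $B$, then to $C$); and (b) it is invariant under isomorphism of the ambient structure, i.e.\ if $A \subseteq_s B$ and $\theta\colon B \to B'$ is an isomorphism then $\theta[A] \subseteq_s B'$ (conjugate each extension by $\theta$).

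Using (a) and (b), I would argue in two stages. Since $f$ is a symmetric embedding, $f[\cM] \subseteq_s \cN$; since $\cM' \subseteq_s \cM$ and $f$ is an isomorphism onto $f[\cM]$, fact (b) gives $f[\cM'] \subseteq_s f[\cM]$, whence $f[\cM'] \subseteq_s \cN$ by (a). Likewise $g[\cN] \subseteq_s \cM$ because $g$ is symmetric, so applying (b) to the isomorphism $h\colon \cM \to f[\cM']$ yields $h[g[\cN]] \subseteq_s f[\cM']$, and then (a) gives $h[g[\cN]] \subseteq_s \cN$. Thus $h[g[\cN]]$ is a monochromatic, symmetrically embedded copy of $\cN$, proving $\cN$ symmetrically indivisible; the converse again follows by symmetry.

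The routine parts are the colour bookkeeping (that pulling the coloring back along $f$ and pushing the monochromatic copy forward preserves monochromaticity) and the verifications of (a) and (b), which are pure diagram chases. The one point needing genuine care --- and the main obstacle --- is recognizing that a single transport does not suffice: applying (symmetric) indivisibility of $\cM$ produces a copy of $\cM$, not of $\cN$, so the second embedding via $g$ is essential, and one must verify that symmetric embeddedness is robust enough to be threaded through the composite $h \circ g$. Facts (a) and (b) are precisely what legitimize this threading.
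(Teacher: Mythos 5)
Your proof is correct and follows essentially the same route as the paper: reduce to one implication by symmetry of $\sim_{(s)}$, transport the coloring to a symmetrically embedded copy of $\cM$ in $\cN$, apply (symmetric) indivisibility of $\cM$, then embed $\cN$ symmetrically into the resulting monochromatic copy and conclude by transitivity of $\subseteq_s$. The only difference is presentational --- you pull the coloring back along $f$ and explicitly isolate transitivity and isomorphism-invariance of symmetric embeddedness as lemmas, where the paper restricts the coloring to the image and uses these facts implicitly.
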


\begin{proof}\ 
\begin{enumerate}
\setcounter{enumi}{1}
\item 
	Because $ \sim_{s} $ is an equivalence relation, it is enough to show one direction:
	suppose \iM\ is symmetrically indivisible, and let $ c:\N\rightarrow\{\text{red},\text{blue}\} $ be a colouring of \iN. Since $\M \lesssim_{s} \N$, let $\M_0 \subseteq_{s} \N$ be   such that $\M_0 \cong \M$, so $c \upharpoonright \M_0$ is a colouring of $\M_0$, and since \iMs is symmetrically indivisible, so is $\M_0$ and there is a monochromatic $\M_0^{\prime} \subseteq_{s} \M_0$ such that $\M_0^{\prime}$ is isomorphic to $\M_0 \cong \M$. Now, since $\N \lesssim_{s} \M \cong \M_0^{\prime}$,  there is $\N_0 \subseteq_{s} \M_0^{\prime} $ such that $\N_0 $ is isomorphic to \iNs and since $\M_0^\prime$ is monochromatic, so is $\N_0$. Now, since $\N_0 \subseteq_{s} \M_0^{\prime} \subseteq_{s} \M_0 \subseteq_{s} \N $, by transitivity $\N_0 \subseteq_{s} \N $.
\setcounter{enumi}{0}
\item 
	Repeat the same argument, omitting ``symmetric".
\end{enumerate}

\end{proof}

Now we are ready to construct $\cont$ examples of symmetrically indivisible graphs and $\cont$ examples of symmetrically indivisible linear orders, both based on known symmetrically indivisible structures, and the equivalence relation $\sims$:

Recall:

\begin{fact}\label{RadoQSymIndivisible}
the random graph and $(\Q,<)$ are  symmetrically indivisible. (\cite{GeKo11}, \cite{HKO11})
\end{fact}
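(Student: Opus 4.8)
The plan is to treat both structures uniformly. Each is countable, $\aleph_0$-categorical and ultrahomogeneous, so plain indivisibility is the classical (and easy) input, and the entire difficulty is upgrading a monochromatic copy to a \emph{symmetrically embedded} one. First I would invoke the known indivisibility: for $(\Q,<)$ every $2$-colouring has a colour class containing a subset isomorphic to $(\Q,<)$, and for the random graph $\Gamma$ every $2$-colouring has a monochromatic induced copy of $\Gamma$. The remaining, and real, task is to produce such a copy $\M'$ for which \emph{every} automorphism extends to the ambient structure.

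The mechanism I would use is a back-and-forth extension of a given $\sigma\in\aut(\M')$. Writing the ambient structure as $M=M'\sqcup(M\setminus M')$, any extension must fix $M'$ setwise (acting as $\sigma$) and biject the complement with itself, sending each complement point $x$ to a point realizing the $\sigma$-image of the position (cut, resp.\ type) of $x$ over $M'$. Hence all $\sigma$ extend precisely when the family of positions occupied by complement points over $M'$ is \emph{invariant} under the natural action of $\aut(\M')$, with each occurring position realized infinitely often. This reduces symmetric embeddability to a combinatorial condition on how $M\setminus M'$ sits over $M'$.

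For $(\Q,<)$ the relevant positions are the Dedekind cuts of $\M'$ met by complement points, and the key observation is what does \emph{not} work: if $\M'$ is dense in $\Q$ then every complement point realizes a two-sided gap cut, and since $\aut(\Q,<)$ is transitive on such cuts their orbit is uncountable, so no countable set of them can be invariant and a dense monochromatic copy is \emph{never} symmetrically embedded. I would therefore build $\M'$ so that its complement meets only cuts of countable orbit---the two end cuts $(\emptyset,M')$, $(M',\emptyset)$ and the successor/predecessor cuts $r^{+},r^{-}$ with $r\in M'$. Concretely I would realise the ambient order as a sum in which each point $r$ of $\M'$ is immediately followed by a whole $\Q$-block of complement anchored one-sidedly at $r$, so that every complement point realizes some $r^{+}$; the occupied family $\{r^{+}:r\in M'\}$ is then $\aut(\M')$-invariant, and any $\sigma$ extends by acting as $\sigma$ on $\M'$ and mapping the block after $r$ onto the block after $\sigma(r)$. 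For $\Gamma$ the positions are the neighbourhoods $N(x)\cap M'$; the generic half-connected types again have uncountable orbit and must be avoided, whereas the families of \emph{finite} and of \emph{cofinite} neighbourhoods are $\aut(\M')$-invariant, so I would force every complement point to be joined to finitely many or to cofinitely many points of $\M'$, each pattern occurring infinitely often, and extend $\sigma$ by the corresponding back-and-forth.

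The step I expect to be the main obstacle is, in each case, exactly this last one: the \emph{simultaneous} recursion that keeps the copy monochromatic and isomorphic to the whole structure while confining its complement to an automorphism-invariant, countable-by-orbits family of positions. The naive dense (resp.\ generic) copy furnished by plain indivisibility is \emph{not} symmetrically embedded, so the construction must be tuned from the outset to the small-orbit cuts for $\Q$ and the finite/cofinite types for $\Gamma$; checking that such a tuned copy can always be extracted from an arbitrary colour class is where the genuine work lies, and is presumably what \cite{GeKo11} and \cite{HKO11} carry out in detail.
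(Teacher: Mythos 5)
The statement you are proving is not actually proved in the paper: it is imported as a black box from \cite{GeKo11} and \cite{HKO11} (that is why it is labelled a Fact), so your sketch has to stand on its own, and it does not. The step you yourself flag as ``where the genuine work lies'' --- extracting, from an \emph{arbitrary} colour class, a monochromatic copy isomorphic to the whole structure whose complement is confined to an invariant family of small-orbit positions --- is precisely the content of the fact, and you defer it back to the very same references. Nothing in your argument shows such a tuned copy exists inside an arbitrary colour class; plain indivisibility only hands you a copy that, as you correctly observe, is typically dense/generic and hence \emph{not} symmetrically embedded. Reducing the theorem to ``what \cite{GeKo11} and \cite{HKO11} presumably carry out'' is a restatement of the problem, not a proof.

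Moreover, the reduction you base everything on is false as stated. Extendability of every $\sigma\in\aut(\M')$ is \emph{not} equivalent to ``the family of occupied positions over $M'$ is $\aut(\M')$-invariant, with each occurring position realized infinitely often'': an extension must also preserve the induced structure on $M\setminus M'$, which this criterion ignores. The paper's own $\Gamma^*$-type construction (\Cref{GammaStar}, \Cref{Id}) shows how this fails: attach to each vertex $g_n$ of $\Gamma$ an infinite \emph{clique} when $n$ is even and an infinite \emph{independent set} when $n$ is odd, every new vertex joined to $g_n$ alone. Over the copy $\Gamma$, each complement point realizes the singleton position $\{g_n\}$; the family of singleton positions is $\aut(\Gamma)$-invariant and each is realized infinitely often, yet no automorphism of $\Gamma$ carrying $g_0$ to $g_1$ extends, since it would have to map a clique onto an independent set. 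So even after confining complement points to finite/cofinite neighbourhoods (resp.\ successor cuts), you must additionally control the isomorphism types of the complement ``blocks'' and how $\aut(\M')$ permutes them; for $(\Q,<)$ your $\Q$-block construction does this implicitly (it is exactly the lexicographic-sum mechanism of Lemma 2.8 of \cite{HKO11}, used in \Cref{QEmbeds}), but for $\Gamma$ your sketch leaves it entirely unaddressed, and it is there, not in the positions bookkeeping, that the real difficulty of the random graph case lies.
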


In \cite{Hen71} it was shown that:
\begin{fact}\label{gammaEmbeds}
Let $\Gamma$ be the random graph. For every countable graph $G$, $G \embedS \G$.
\end{fact}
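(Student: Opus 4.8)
The plan is to realize a canonical random-graph extension of $G$ that is equivariant under $\aut(G)$, so that every automorphism of the copy of $G$ lifts automatically. I would use the standard extension-property characterization of the random graph: a countable graph is isomorphic to $\G$ if and only if for every pair of disjoint finite vertex sets $A,B$ there is a vertex adjacent to every vertex of $A$ and to no vertex of $B$. The strategy is to build an increasing chain $G=\Gamma_0\subseteq\Gamma_1\subseteq\cdots$, adjoining witnesses for this property at each stage without making any arbitrary choices, to set $\Gamma'=\bigcup_n\Gamma_n$, and then to verify both that $\Gamma'\cong\G$ and that the inclusion $G\embedArrow\Gamma'$ is symmetric.

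First I would describe the successor step. Given $\Gamma_n$, let $\Gamma_{n+1}$ be obtained by adjoining, for \emph{each} finite subset $A\subseteq V(\Gamma_n)$, a single new vertex $v_A$ whose neighbours are exactly the elements of $A$, so that $v_A$ is nonadjacent to every other vertex of $\Gamma_{n+1}$ (old or new) lying outside $A$. This rule is completely canonical: the indexing of the new vertices depends only on $\Gamma_n$ itself, so any $\sigma\in\aut(\Gamma_n)$ extends to $\Gamma_{n+1}$ by setting $\sigma(v_A)=v_{\sigma A}$. A one-line check on adjacencies ($v_A$ is adjacent to $a$ iff $a\in A$ iff $\sigma a\in\sigma A$ iff $v_{\sigma A}$ is adjacent to $\sigma a$, while distinct new vertices stay nonadjacent) shows this extension is again an automorphism of $\Gamma_{n+1}$.

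Next I would assemble the limit $\Gamma'=\bigcup_n\Gamma_n$. Since no edge inside $V(G)$ is ever added, $G$ sits in $\Gamma'$ as an induced subgraph, and $\Gamma'$ is countably infinite. Given disjoint finite $A,B\subseteq V(\Gamma')$, both lie in some $\Gamma_n$, and then $v_A\in\Gamma_{n+1}$ is adjacent to all of $A$ and (being nonadjacent to everything outside $A$) to none of $B$; hence the extension property holds and $\Gamma'\cong\G$. For symmetry, each $\sigma\in\aut(G)=\aut(\Gamma_0)$ extends stage by stage to automorphisms $\sigma_n\in\aut(\Gamma_n)$ with $\sigma_{n+1}\upharpoonright\Gamma_n=\sigma_n$, and these cohere to $\bigcup_n\sigma_n\in\aut(\Gamma')$ extending $\sigma$. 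Thus the inclusion $G\subseteq_{s}\Gamma'$ is a symmetric embedding, and composing it with any isomorphism $\Gamma'\cong\G$ (which conjugates extending automorphisms to extending automorphisms) yields $G\embedS\G$.

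I expect the only delicate point to be keeping the construction genuinely canonical while still forcing the extension property. The temptation is to assign the new vertices richer adjacency patterns, but any choice beyond ``adjacent to exactly $A$'' risks not being respected by $\sigma$ and would break the lift of automorphisms. Defining each $v_A$ to be adjacent to precisely $A$ and nothing else is what reconciles the two demands, and the substantive content of the argument is checking that this minimal, choice-free rule nonetheless produces the full extension property in the limit.
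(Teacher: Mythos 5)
Your proof is correct, but note that the paper itself does not prove this statement: it is quoted as a fact cited from \cite{Hen71}, and the paper only remarks later that ``a similar construction to that of Theorem 3.1 in \cite{Hen71}'' yields the analogous results for $n$-hypergraphs and edge-colored graphs. So the comparison is between your self-contained argument and Henson's cited one, and your argument is essentially the standard (and, in spirit, Henson's) canonical-witness construction: the tower $G=\Gamma_0\subseteq\Gamma_1\subseteq\cdots$, where stage $n+1$ adjoins, for every finite $A\subseteq V(\Gamma_n)$, a vertex $v_A$ adjacent to exactly $A$. The key point, which you identify correctly, is that the witnesses are added by a choice-free rule depending only on the previous stage, so any $\sigma\in\aut(G)$ propagates through the tower via $v_A\mapsto v_{\sigma A}$ and survives to the limit; composing with an isomorphism $\Gamma'\cong\Gamma$ then transports the extension property of automorphisms by conjugation. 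All steps check out: the limit is countable, satisfies the extension property characterizing the random graph (given disjoint finite $A,B\subseteq V(\Gamma')$, pick $n$ with $A\cup B\subseteq V(\Gamma_n)$ and use the stage-$(n+1)$ vertex $v_A$), hence is isomorphic to $\Gamma$. One small wording caution: $v_A$ is \emph{not} nonadjacent to everything outside $A$ in the final graph $\Gamma'$, since later witnesses $v_C$ with $v_A\in C$ are adjacent to it; what your argument actually uses, correctly, is only the nonadjacencies of $v_A$ inside $\Gamma_{n+1}$, where $B$ lives because $B\subseteq V(\Gamma_n)$. Your write-up thus supplies a complete proof of a statement the paper leaves to a citation.
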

\begin{corollary}\label{embedsGtheSym}
Every countable graph which symmetrically embeds \iGs is symmetrically indivisible.
\end{corollary}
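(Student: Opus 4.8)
The plan is to reduce the statement to the already-established symmetric indivisibility of the random graph $\G$ itself, by showing that any such $G$ is $\sims$-equivalent to $\G$ and then invoking \Cref{equivResult}. Let $G$ be a countable graph that symmetrically embeds $\G$. Unwinding the definition, the hypothesis says precisely that there is a symmetric embedding of $\G$ into $G$, i.e.\ $\G \embedS G$.

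The first step is to supply the reverse symmetric embedding, which comes for free: since $G$ is itself a countable graph, \Cref{gammaEmbeds} gives $G \embedS \G$. Having both $\G \embedS G$ and $G \embedS \G$, I would conclude $G \sims \G$ directly from the definition of $\sims$.

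With the equivalence $G \sims \G$ established, I would apply part (2) of \Cref{equivResult}: because $G \sims \G$, the graph $G$ is symmetrically indivisible if and only if $\G$ is. Since \Cref{RadoQSymIndivisible} records that the random graph $\G$ is symmetrically indivisible, it follows that $G$ is symmetrically indivisible as well, completing the argument.

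I do not expect any genuine obstacle here; the corollary is essentially a bookkeeping combination of the two cited facts with \Cref{equivResult}. The only point requiring a moment of care is matching the directions of the embeddings correctly: the hypothesis must be read as $\G \embedS G$, while \Cref{gammaEmbeds} furnishes the complementary $G \embedS \G$, and it is the conjunction of these two that triggers the $\sims$-equivalence.
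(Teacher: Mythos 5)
Your proposal is correct and follows exactly the paper's argument: combine the hypothesis $\G \embedS G$ with \Cref{gammaEmbeds} to get $G \sims \G$, then apply \Cref{equivResult} together with \Cref{RadoQSymIndivisible}. If anything, you are slightly more explicit than the paper, whose proof compresses the appeal to \Cref{gammaEmbeds} into the phrase ``by definition $G \sims \G$.''
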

\begin{proof}
Let $G$ be a countable graph which symmetrically embeds \iG . Then by definition $G \sims \G$ thus by Fact \ref{RadoQSymIndivisible} combined with Proposition \ref{equivResult}, $G$ is symmetrically indivisible.
\end{proof}
For $(\Q,<)$ we have a result similar to Fact \ref{gammaEmbeds}:
\begin{proposition}\label{QEmbeds}
For every countable linear order $A$, $A \embedS \Q$.
\end{proposition}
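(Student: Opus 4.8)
The plan is to lean on Cantor's theorem that every countable dense linear order without endpoints is isomorphic to $(\Q,<)$. Embedding $A$ into $\Q$ as a mere suborder is immediate, so the whole difficulty is the \emph{symmetric} requirement: the image must be arranged so that every order-automorphism of it extends to an automorphism of the ambient $\Q$. My plan is to realize $\Q$ as a copy of $A$ in which each point has been surrounded by dense filler, so that all points of the copy have isomorphic neighbourhoods and any automorphism can be lifted block by block.

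Concretely, writing $\eta$ for the order type of $(\Q,<)$, I would form the $A$-indexed ordered sum
\[
L \;=\; \eta \;+\; \sum_{a\in A}\bigl(\mathbf 1_a+\eta_a\bigr),
\]
where $\mathbf 1_a$ is a single point and each $\eta_a$ is a fresh copy of $\eta$, the outer sum taken in the order of $A$. Set $B=\{\mathbf 1_a: a\in A\}$, so that $a\mapsto \mathbf 1_a$ is an order-isomorphism $A\cong B$. The first, routine, step is to check that $L$ is a countable dense linear order without endpoints: countability is clear, the leading and trailing $\eta$-pieces rule out endpoints, and the fact that every point is immediately followed (and, using the leading $\eta$, preceded) by a copy of $\eta$ shows no two elements are adjacent. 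By Cantor's theorem $L\cong(\Q,<)$, so it suffices to produce a symmetrically embedded copy of $A$ inside $L$.

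The heart of the argument, and the step I expect to be the main obstacle, is the extension property. Since the language is $\{<\}$, an automorphism of $B$ is just an order-automorphism, and via $A\cong B$ it corresponds to some $\sigma\in\aut(A)$. Given such a $\sigma$, I would define $\hat\sigma\colon L\to L$ by fixing the leading $\eta$ (say, by the identity) and on each block sending $\mathbf 1_a+\eta_a$ to $\mathbf 1_{\sigma(a)}+\eta_{\sigma(a)}$: the point $\mathbf 1_a$ goes to $\mathbf 1_{\sigma(a)}$, and $\eta_a$ is sent to $\eta_{\sigma(a)}$ by any fixed isomorphism (both are copies of $\eta$). Because $\sigma$ permutes the indices preserving their order, the blocks are carried to blocks in an order-preserving way, and each block map is itself order-preserving; hence $\hat\sigma$ is an automorphism of $L$ with $\hat\sigma\upharpoonright B=\sigma$. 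Transporting through a fixed isomorphism $L\cong(\Q,<)$ turns $a\mapsto\mathbf 1_a$ into the desired symmetric embedding $e\colon A\embedArrow\Q$, witnessing $A\embedS\Q$.

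What makes the construction work is precisely the dense filler: because every point of $B$ sits between two copies of $\eta$, the gaps that $\sigma$ might create or destroy can always be absorbed by re-choosing isomorphisms of these $\eta$-pieces, which is why no compatibility condition on $\sigma$ is needed. The only points requiring genuine care are the verification that $L$ is dense without endpoints (so that Cantor applies) and that $\hat\sigma$ is globally order-preserving across block boundaries; both become straightforward once the block decomposition is fixed.
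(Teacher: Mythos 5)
Your proof is correct and takes essentially the same route as the paper: the paper forms the lexicographic order on $A\times\Q$ (each point of $A$ blown up by dense filler), identifies it with $(\Q,<)$ via $\aleph_0$-categoricity of DLO, and extends any automorphism of the section $A\times\{q\}$ fiberwise---exactly the role of your blockwise $\hat\sigma$. Your sum $\eta+\sum_{a\in A}(\mathbf{1}_a+\eta_a)$ differs only in bookkeeping (a marked left endpoint of each block plus a leading copy of $\eta$, instead of a marked interior point of each fiber).
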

\begin{proof}
Let $A[\Q]$ be the lexicographic order on $A\times \bQ$.
This is a countable dense linear order without end-points (DLO). By $\aleph_0$-categoricity of DLO, it is isomorphic to $\langle \bQ, <\rangle$.

For a fixed $q\in \bQ$, the induced substructure on $A\times \{q\}$ is isomorphic to $A$ and the fact that it is symmetrically embedded can be easily verified and is actually a special case of Lemma 2.8 of \cite{HKO11}.
\end{proof}
\bigskip

\noindent From this we have, exactly like Corollary \ref{embedsGtheSym} for graphs :
\begin{corollary}\label{embedQthenSym}
Every countable linear order which symmetrically embeds $(\Q,<)$ is symmetrically indivisible.
\end{corollary}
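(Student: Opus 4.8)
The plan is to mirror exactly the proof of \Cref{embedsGtheSym} for graphs, substituting the linear-order analogues of the two embedding facts. The entire content of the corollary is to upgrade the one-directional hypothesis into the two-sided equivalence $A \sims \Q$, at which point \Cref{equivResult}(2) together with \Cref{RadoQSymIndivisible} does the rest.

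First I would unpack the hypothesis. Saying that the countable linear order $A$ \emph{symmetrically embeds} $(\Q,<)$ means precisely that there is a symmetric embedding of $(\Q,<)$ into $A$, i.e. $\Q \embedS A$. This gives one of the two inequalities needed for $\sims$ for free, so no work is required here beyond correctly matching the direction of $\embedS$ to the wording ``symmetrically embeds.''

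Next I would supply the reverse inequality $A \embedS \Q$. This is exactly the statement of \Cref{QEmbeds}, which guarantees that every countable linear order admits a symmetric embedding into $(\Q,<)$ (via the lexicographic construction on $A \times \Q$). Combining the two directions yields $A \sims \Q$ by the definition of $\sims$.

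Finally I would invoke \Cref{RadoQSymIndivisible}, which asserts that $(\Q,<)$ is symmetrically indivisible, together with \Cref{equivResult}(2): since symmetric indivisibility is an invariant of the equivalence relation $\sims$ and $A \sims \Q$, the order $A$ is symmetrically indivisible. I do not expect any genuine obstacle here, as all the difficulty is absorbed into the earlier results; the only point demanding care is bookkeeping of the directions of the symmetric embeddings so that $\Q \embedS A$ and $A \embedS \Q$ are correctly assembled into $A \sims \Q$.
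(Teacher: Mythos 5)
Your proposal is correct and coincides with the paper's own argument: the paper gives no separate proof for this corollary, stating it follows ``exactly like \Cref{embedsGtheSym} for graphs,'' i.e., the hypothesis yields $\Q \embedS A$, \Cref{QEmbeds} yields $A \embedS \Q$, hence $A \sims \Q$, and \Cref{RadoQSymIndivisible} together with \Cref{equivResult} finishes. Your bookkeeping of the embedding directions matches the intended reading.
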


\begin{definition}
Let $G$, $H$ be graphs and  for convenience assume $|G|\cap|H|=\emptyset$. We define $G+^{\cG}H$ to be  the graph whose universe is $|G|\cup|H|$ and $E^{(G +^{\cG} H)} := E^G \cup E^H$.

Namely, $G+^{\cG}H$ is just the ``disjoint union of graphs" as known in graph theory and denoted by  $G\cup H$.
\end{definition}
\begin{remark}\label{remarkGraphAddition} Let $G,H,K$ be graphs.
\begin{enumerate}
\item $G,H\lesssim_{s} G+^{\cG} H$
\item $G+^{\cG} H = H+^{\cG} G $
\item $(G+^{\cG} H) +^{\cG} K = G+^{\cG} (H+^{\cG} G) $
\item If $C\subseteq G$ is a union of connected components, then $G = (G\setminus C)+^{\cG} C $
\end{enumerate}
\end{remark}
\begin{proposition}\label{lotsofGraphs}
If $G$, $H$ are graphs  such that $\G +^{\cG} G \cong \G +^{\cG} H$, then $G \cong H$.
\end{proposition}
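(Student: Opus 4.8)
The plan is to pass from graphs to the multisets of isomorphism types of their connected components, where disjoint union becomes multiset union and the hypothesis becomes a cancellation problem. First I would record the standard structural fact that every graph is the disjoint union of its connected components, and that any isomorphism of graphs restricts to a bijection between their connected components sending each to an isomorphic one. Consequently, writing $\mathcal{C}(X)$ for the multiset whose elements are the isomorphism types of the connected components of $X$ (each type counted with its multiplicity, a cardinal $\le \aleph_0$ since the graphs are countable), two graphs $X,Y$ satisfy $X \cong Y$ if and only if $\mathcal{C}(X) = \mathcal{C}(Y)$ as multisets. This reduces the proposition to a statement purely about $\mathcal{C}(G)$ and $\mathcal{C}(H)$.

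Next I would use that the random graph $\Gamma$ is connected (it has diameter $2$ by the extension property), so $\mathcal{C}(\Gamma) = \{\Gamma\}$ and hence, since $\mathcal{C}$ turns disjoint union into multiset union, $\mathcal{C}(\Gamma +^{\cG} G) = \{\Gamma\} \uplus \mathcal{C}(G)$ and likewise $\mathcal{C}(\Gamma +^{\cG} H) = \{\Gamma\} \uplus \mathcal{C}(H)$. The hypothesis $\Gamma +^{\cG} G \cong \Gamma +^{\cG} H$ together with the invariance above then gives the multiset identity $\{\Gamma\} \uplus \mathcal{C}(G) = \{\Gamma\} \uplus \mathcal{C}(H)$. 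Note that the isomorphism need not send the distinguished copy of $\Gamma$ in $\Gamma +^{\cG} G$ to the distinguished copy in $\Gamma +^{\cG} H$ --- it may land inside $H$ --- which is exactly why I work at the level of multisets rather than tracking the named summand.

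Finally I would cancel the single copy of $\Gamma$. For every isomorphism type $C$ of a connected graph with $C \not\cong \Gamma$, its multiplicity in $\{\Gamma\}\uplus \mathcal{C}(G)$ equals its multiplicity in $\mathcal{C}(G)$, and similarly for $H$, so these multiplicities agree in $G$ and $H$. For the type $\Gamma$ itself, letting $m_G, m_H \le \aleph_0$ be its multiplicities in $\mathcal{C}(G), \mathcal{C}(H)$, the identity yields $1 + m_G = 1 + m_H$ as cardinals; inspecting the finite and infinite cases of cardinal addition shows $m_G = m_H$ (if one side is infinite so is the other, and if both are finite we subtract). Hence $\mathcal{C}(G) = \mathcal{C}(H)$, and therefore $G \cong H$. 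The only genuinely delicate point is this last cancellation in the presence of possibly infinitely many $\Gamma$-components inside $G$ or $H$; the cardinal-arithmetic case split handles it, and everything else is bookkeeping about connected components.
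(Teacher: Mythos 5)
Your proof is correct, but it takes a different route from the paper's. The paper works directly with the given isomorphism $\phi\colon \G +^{\cG} G \to \G +^{\cG} H$: since $\G$ is connected and isomorphisms carry connected components onto connected components, either $\phi[\G]=\G$, in which case $\phi\upharpoonright G$ is already an isomorphism $G\to H$, or $\phi[\G]\subseteq H$, in which case $\phi\upharpoonright G$ is an isomorphism from $G$ onto $(H\setminus\phi[\G])+^{\cG}\G$, and the latter is isomorphic to $H$ because one can swap the component $\G$ back for $\phi[\G]$. This is a two-case swap argument that never counts components. You instead pass to the complete invariant $\mathcal{C}(\cdot)$, the multiset of isomorphism types of connected components, and reduce the statement to cancelling a single copy of $\Gamma$ from a multiset identity, which you settle by cardinal arithmetic ($1+m_G=1+m_H$ implies $m_G=m_H$ since adding a finite cardinal is cancellable). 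Your approach costs more setup: you need both directions of the equivalence $X\cong Y \iff \mathcal{C}(X)=\mathcal{C}(Y)$, and the direction reconstructing an isomorphism from equal multisets quietly uses (countable) choice to pick component-wise isomorphisms, plus the observation that $\Gamma$ is connected via the extension property (which the paper also needs but leaves implicit). What it buys is transparency about generality: your argument visibly shows that disjoint union with \emph{any} single connected graph is cancellable, whereas the paper's proof has the same generality but leaves it less explicit. Both proofs hinge on exactly the same two structural facts --- preservation of components under isomorphism and connectedness of $\G$ --- so the difference is one of execution, not of underlying idea.
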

\begin{proof}
Let $\phi: \G +^{\cG} G \to \G +^{\cG} H$ be an isomorphism. Since $\phi$ maps connected components onto connected components and $\G$ is connected, either $\phi[\G] = \G$ or $\phi[\G]\subseteq H$. In the first case $\phi [G] = H$ and $\phi\upharpoonright G:G \to H$ is an isomorphism. In the second case,   by Remark  \ref{remarkGraphAddition}
$$H+^{\cG} \G = (H\setminus \phi[\G])+^{\cG} \phi[\G]+^{\cG} \G$$
thus $\phi\upharpoonright G: G\to (H\setminus \phi[\G])+^{\cG} \G$ is an isomorphism, but $(H\setminus \phi[\G])+^{\cG} \G \cong H$.
\end{proof}

\begin{definition}
Let $A$ and $B$ be linear orders, and  for convenience assume $|A|\cap|B|=\emptyset$. We define $A+^{lo}B$ the linear order whose universe is $|A|\cup|B|$ and \[<^{A +^{lo} B} := <^A \cup <^B \cup\ \big\{(a,b)\ |\ a\in A \text{ and } b\in B \big\}.\] Namely, $A+^{lo}B$ is the ``concatenation" of $A$ and $B$ -- just putting $B$ right after $A$.
\end{definition}

\begin{proposition}\label{lotsofOrders}
If $X$ is a linear order such that $|X|=\{x,y\}, <^X = {(x,y)}$, and $A$, $B$ are linear orders such that $\big( \Q +^{lo} X +^{lo} A \big) \cong \big(\Q +^{lo} X +^{lo} B \big)$, then $A \cong B$.
\end{proposition}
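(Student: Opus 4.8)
The plan is to recover $A$ from $L := \Q +^{lo} X +^{lo} A$ by locating the distinguished pair $X=\{x<y\}$ in an isomorphism-invariant way, in analogy with the use of connected components in \Cref{lotsofGraphs}. The role of the two-element order $X$ is to plant a ``marker'' immediately after the dense part: I claim that $x$ is exactly the least element of $L$ that possesses an immediate successor. Granting this, any isomorphism must send the marker of the first order to the marker of the second, and restricting to the final segment above it yields the desired isomorphism $A\cong B$.

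First I would verify the characterization of $x$. Call $b$ an immediate successor of $a$ if $a<b$ and no element lies strictly between them; this notion is preserved by every isomorphism of linear orders. No element $q$ of the initial copy of $\Q$ has an immediate successor in $L$: within $\Q$ this fails by density, and since $\Q$ has no greatest element there is always some $q'$ with $q<q'<x$, so no element above $\Q$ can be immediately above $q$ either. On the other hand $x$ does have an immediate successor, namely $y$, since $X=\{x<y\}$ has no element strictly between $x$ and $y$ and everything else of $L$ is either below $x$ or above $y$. Hence the set of elements of $L$ admitting an immediate successor is nonempty, lies entirely on or above $x$, and contains $x$; therefore $x$ is its least element. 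This description is intrinsic, so it is respected by any isomorphism.

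To conclude, write $L_A=\Q +^{lo} X +^{lo} A$ with marker $x_A<y_A$ and $L_B=\Q +^{lo} X +^{lo} B$ with marker $x_B<y_B$, and let $\phi:L_A\to L_B$ be an isomorphism. By the previous paragraph $\phi(x_A)=x_B$, and since an isomorphism carries immediate successors to immediate successors, $\phi(y_A)=y_B$. An order isomorphism preserves final segments, so $\phi$ maps $\{z\in L_A: z>y_A\}$ bijectively and order-isomorphically onto $\{z\in L_B: z>y_B\}$. But these final segments are precisely the copies of $A$ and $B$, so $A\cong B$.

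The only real obstacle is pinning down the intrinsic description of the marker: one must make sure that the chosen property (``least element with an immediate successor'') is never satisfied prematurely inside the dense part $\Q$, which is exactly what density and the absence of a greatest element in $\Q$ guarantee, and that it is genuinely satisfied at $x$, which is what the extra element $y$ is there to provide. The degenerate case $A=\emptyset$ (and likewise $B=\emptyset$) is handled by the same argument, the final segment above the marker simply being empty.
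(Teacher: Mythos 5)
Your proof is correct and takes essentially the same route as the paper: both identify $x$ intrinsically as the minimal element of the order possessing an immediate successor, deduce that any isomorphism $\phi$ fixes the marker pair (i.e.\ $\phi(x_A)=x_B$, $\phi(y_A)=y_B$), and conclude by matching the final segments above $y$, which are exactly $A$ and $B$. Your write-up merely makes explicit the verification (left implicit in the paper) that density and the lack of a greatest element in $\Q$ prevent any element of the dense part from having an immediate successor.
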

\begin{proof}
Let $\phi:\big( \Q +^{lo} X +^{lo} A \big) \rightarrow \big( \Q +^{lo} X +^{lo} B\big)$ be an isomorphism. Now  $x$ is the minimal element with an immediate successor and $y$ is the immediate succesor of $x$ in both losets, thus $\phi(x) = x$, and $\phi(y) = y$. Thus \[\phi[A] = \phi\Set{z\in \Q +^{lo} X +^{lo} A | y<z} =  \Set{z\in \Q +^{lo} X +^{lo} B | y<z} = B\]
\end{proof}
Similarly to Remark \ref{remarkGraphAddition}:
\begin{remark}\label{remarkLoAddition}
 If $A$, $B$ are linear orders, then $A, B \embedS \big(A +^{lo} B\big)$

\end{remark}

\begin{corollary}\label{manyBoth}\ 
\begin{enumerate}
\item There is a 1-1 map between isomorphism classes of countable graphs and isomorphism classes of countable symmetrically indivisible graphs.
\item There is a 1-1 map between isomorphism classes of countable \emph{losets} and isomorphism classes of countable symmetrically indivisible \emph{losets}.
\end{enumerate}
\end{corollary}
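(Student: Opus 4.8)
The plan is to build, in each clause, an explicit injection from the isomorphism classes of arbitrary countable structures into those of the symmetrically indivisible ones, using the ``prepend a copy of the canonical symmetrically indivisible object'' operations introduced above. For graphs I would take the map $[G] \mapsto [\G +^{\cG} G]$, and for losets the map $[A] \mapsto [\Q +^{lo} X +^{lo} A]$, where $X$ is the fixed two-element chain of \Cref{lotsofOrders}. Both targets are clearly countable, so it remains to check three things in each case: that the map is well defined on isomorphism classes, that its image lands in the symmetrically indivisible class, and that it is injective.

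Well-definedness is immediate, since disjoint union (resp.\ concatenation) respects isomorphism. For the image, note that by \Cref{remarkGraphAddition}(1) we have $\G \embedS \G +^{\cG} G$, so $\G +^{\cG} G$ is a countable graph symmetrically embedding $\G$ and is therefore symmetrically indivisible by \Cref{embedsGtheSym}; dually, applying \Cref{remarkLoAddition} to the two blocks $\Q$ and $X +^{lo} A$ gives $\Q \embedS \Q +^{lo} X +^{lo} A$, so this loset symmetrically embeds $(\Q,<)$ and is symmetrically indivisible by \Cref{embedQthenSym}. Injectivity is exactly the cancellation content already isolated: \Cref{lotsofGraphs} says $\G +^{\cG} G \cong \G +^{\cG} H$ implies $G \cong H$, and \Cref{lotsofOrders} says $\Q +^{lo} X +^{lo} A \cong \Q +^{lo} X +^{lo} B$ implies $A \cong B$.

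No single step is genuinely hard, since the real work has been front-loaded into the cancellation propositions; the one point to stay alert to is the asymmetry between the two cases. For graphs, connectedness of $\G$ lets any isomorphism locate and strip off the copy of $\G$, so no auxiliary block is needed. Linear orders have no analogous connected-component structure, and a bare $\Q +^{lo} A \cong \Q +^{lo} B$ need not force $A \cong B$; the interposed chain $X$ plants a definable marker --- the least element possessing an immediate successor --- pinpointing where $A$ and $B$ begin, which is precisely what the proof of \Cref{lotsofOrders} exploits. Hence the loset map must use $\Q +^{lo} X +^{lo} A$ rather than $\Q +^{lo} A$. Interpreting the corollary's ``1-1 map'' as an injection, the two constructions then establish both clauses; combined with the trivial inclusion of the symmetrically indivisible structures into all countable structures, Cantor--Schr\"oder--Bernstein upgrades each to a bijection should a two-sided correspondence be wanted.
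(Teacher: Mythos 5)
Your proposal is correct and matches the paper's proof essentially verbatim: the same two maps $G \mapsto \G +^{\cG} G$ and $A \mapsto \Q +^{lo} X +^{lo} A$, with membership in the symmetrically indivisible class via \Cref{remarkGraphAddition}, \Cref{embedsGtheSym}, \Cref{remarkLoAddition}, \Cref{embedQthenSym}, and injectivity via the cancellation results \Cref{lotsofGraphs} and \Cref{lotsofOrders}. Your added remarks on why the marker block $X$ is needed for losets and on upgrading to a bijection are sound elaborations beyond what the paper states.
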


\begin{proof}Consider the maps:\ 
\begin{enumerate}
\item $G\mapsto \G+^{\cG} G $
\item $A\mapsto \bQ+^{lo}\{x\}+^{lo}\{y\} +^{lo} A $
\end{enumerate}
By Propositions  \ref{lotsofGraphs} and \ref{lotsofOrders} these are 1-1. By Remark  \ref{remarkGraphAddition} and Corollary \ref{embedsGtheSym}, all the graphs of the form $\G+^G G$ are symmetrically indivisible.
By Remark  \ref{remarkLoAddition} and Corollary  \ref{embedQthenSym} all the losets of the form $\bQ+^{lo}\{x\}+^{lo}\{y\} +^{lo} A $ are symmetrically indivisible.
\end{proof}


We conclude this section with an attempt to generalize both constructions. As mentioned in the introduction, classic examples of symmetrically indivisible structure, in addition to the random graph and the ordered rational numbers, include the universal $n$-hypergraph and the universal edge-coloured graph in $k\leq \omega$ many colours defined below:

\begin{definition}
For $k\leq \omega$, an edge-coloured graph in $k$ many colours $G$ is a graph whose edges are coloured in $k$ many colours -- i.e. it is a structure in the language $\cL_k:=\{R_i\}_{i\in k}$ such  that $\{G \upharpoonright R_i\}_{i\in k}$ are edge-disjoint graphs.

For a fixed $k$, the class of finite edge-coloured graphs in $k$ many colours is a Fra\"iss\'e  class. We denote its Fra\"iss\'e  limit by $\Gamma_k$.
\end{definition}
 
In an attempt to generalize Corollary \ref{manyBoth}, we haven't had much success in  giving an interesting generalization other than the trivial one, which  goes as follows:
\begin{proposition}\label{generalAttempt}
Assume $\cC$ is a class of countable structures in a fixed language $\cL$, along with a symmetrically indivisible structure $\cM \in \cC$ such that for every $C\in \cC$, $C\lesssim_{s} \cM$ and a binary operation on $\cC$, $+^{\cC}$ satisfying $\cM +^{\cC} C_1 \cong \cM +^{\cC} C_2 \implies C_1\cong C_2$ and $A,B\lesssim_s A+^{\cC} B$. Then there is a 1-1 map between structures of $\cC$ up to isomorphism and symmetrically indivisible structures of $\cC$ up to  isomorphism.
\end{proposition}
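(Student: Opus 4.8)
The plan is to reuse, at the abstract level, exactly the argument behind \Cref{manyBoth}. The candidate injection sends the isomorphism class of a structure $C \in \cC$ to the isomorphism class of $\cM +^{\cC} C$. Since $+^{\cC}$ is a binary operation on $\cC$, the structure $\cM +^{\cC} C$ again lies in $\cC$, and I will take for granted (as is implicit in the concrete cases $+^{\cG}$ and $+^{lo}$) that the operation respects isomorphism, so that this assignment is well defined on isomorphism classes.

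First I would show that the image consists of symmetrically indivisible structures. Applying the hypothesis $A, B \embedS A +^{\cC} B$ with $A = \cM$ and $B = C$ gives $\cM \embedS \cM +^{\cC} C$. Conversely, $\cM +^{\cC} C$ is a member of $\cC$, so the hypothesis that every structure in $\cC$ symmetrically embeds into $\cM$ yields $\cM +^{\cC} C \embedS \cM$. Hence $\cM \sims \cM +^{\cC} C$, and since $\cM$ is symmetrically indivisible, \Cref{equivResult}(2) shows that $\cM +^{\cC} C$ is symmetrically indivisible as well. Thus the map indeed lands among the symmetrically indivisible members of $\cC$.

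Next I would verify injectivity. Suppose $\cM +^{\cC} C_1 \cong \cM +^{\cC} C_2$; then the cancellation hypothesis on $+^{\cC}$ immediately gives $C_1 \cong C_2$, so distinct isomorphism classes in $\cC$ are sent to distinct isomorphism classes. Combined with the previous paragraph, this exhibits the required 1-1 map from isomorphism classes of $\cC$ into isomorphism classes of symmetrically indivisible structures in $\cC$.

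The argument is almost purely formal, so I do not expect a genuine obstacle inside the body of the proof; the work is entirely bookkeeping. The only points needing care are that $\cM +^{\cC} C$ genuinely lies in $\cC$ (so that both ``every member of $\cC$ symmetrically embeds into $\cM$'' applies to it and the codomain is correct) and that $+^{\cC}$ descends to isomorphism classes (so the map is well defined). Both are already folded into the hypothesis that $+^{\cC}$ is a binary operation on the class $\cC$, just as they were for $+^{\cG}$ and $+^{lo}$ in \Cref{lotsofGraphs} and \Cref{lotsofOrders}.
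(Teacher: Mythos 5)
Your proposal is correct and is exactly the intended argument: the paper states \Cref{generalAttempt} without proof as the formal abstraction of \Cref{manyBoth}, whose proof is precisely your map $C \mapsto \cM +^{\cC} C$, with symmetric indivisibility of the image obtained from $\cM \sims \cM +^{\cC} C$ via \Cref{equivResult} and injectivity from the cancellation hypothesis. Your added remarks about well-definedness on isomorphism classes and closure of $\cC$ under $+^{\cC}$ are the right bookkeeping points and are consistent with how the paper treats the concrete cases.
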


Regarding $n$-hypergraphs and edge-coloured graphs: A similar construction to that of Theorem 3.1 in \cite{Hen71} can give us a result similar to Fact \ref{gammaEmbeds} and Proposition \ref{QEmbeds}: 

\begin{enumerate}
\item The universal $n$-hypergraph symmetrically embeds every $n$-hypergraph.
\item $\Gamma_k$ symmetrically embeds every edge-coloured graph in $k$ colours.
\end{enumerate}
For $n$-hypergraphs and edge-coloured graphs, $+$ will be just the disjoint union, similar to $+^{\cG}$. Proposition \ref{generalAttempt} gives us a 1-1 map between $n$-hypergraphs up to  isomorphism and symmetrically indivisible hypergraphs up to isomorphism and the same for edge-coloured graphs in $k\leq \omega$ colours.

\section{A Symmetrically Indivisible structure with a reduct that is not Symmetrically Indivisible}\label{sec3}

Recall that in \cite{HKO11} the following question was  asked:
\begin{question}\label{questionReduct}
Let \iMs be a symmetrically indivisible structure in a language
\iLs. Let $\La_0 \subseteq \La$. Is $ \M \upharpoonright \La_0$
symmetrically indivisible?
\end{question}

In this section, we will construct an example answering this question negatively.

First we construct an indivisible structure which is not symmetrically indivisible. The existence of such a structure is a necessary condition for the existence of an example for Question \ref{questionReduct}, since if $\cM$ is indivisible (in particular if it is symmetrically indivisible) then $\cM \upharpoonright \cL_0$ is also indivisible.

\subsection{$\Gamma^*$ -- an example of an indivisible structure which is not symmetrically indivisible}\

Throughout this subsection  $\Gamma$ will denote the random graph. The indivisibility of the random graph is a well known fact that dates back to its definition in \cite{Ra64}. An easy proof is given in \cite{Hen71}.

\begin{definition}\label{GammaStar}
We define the graph $\Gamma^*$ as follows:

Let $ \{K_n\}_{n<\omega}$ be disjoint sets, satisfying $|K_n|=n$ and let $\{g_n\}_{n<\omega}$ be an enumeration of $\Gamma$.
The universe of $\Gamma^{*}$ is defined to be $|\Gamma^{*}| = |\Gamma|\cup \bigcup_{n<\omega}K_n$ and the edges are defined as follows:
\[E^{\Gamma^{*}} = E^{\Gamma} \cup \bigcup_{n<\omega} \Set{(a,b) | a,b \in K_n \cup \{g_n\},\ a\neq b}\]
\end{definition}
In words, if $\{g_n\}_{n \in \omega} $ enumerates the vertices of $\Gamma$ , for each $ n \in \omega $ we add a clique $K_n$ and connect it to $g_n$.
\begin{lemma}\label{notRigid}
\iGas is not rigid
\end{lemma}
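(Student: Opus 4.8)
The plan is to exhibit an explicit nontrivial automorphism of $\Gamma^*$; in fact each of the attached cliques will supply one. The essential observation is that the vertices sitting inside a single clique $K_n$ are mutually interchangeable. By \Cref{GammaStar} the only edges incident to a vertex $v \in K_n$ are those lying inside the clique $K_n \cup \{g_n\}$: indeed $v$ is not a vertex of $\Gamma$, so it appears in no edge of $E^{\Gamma}$, and the sets $K_n$ are pairwise disjoint, so $v$ appears in the clique term of $E^{\Gamma^*}$ for no index other than $n$. Consequently the neighbourhood of any $v \in K_n$ is exactly $(K_n \cup \{g_n\}) \setminus \{v\}$.

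First I would fix some $n \geq 2$ (for instance $n = 2$, so that $K_n$ has at least two elements) and choose two distinct vertices $u, v \in K_n$. I then define $\sigma \colon \Gamma^* \to \Gamma^*$ to be the transposition swapping $u$ and $v$ and fixing every other vertex of $\Gamma^*$. This is manifestly a nontrivial bijection of the universe, so the whole content of the argument is to check that it preserves the edge relation. Since every vertex other than $u$ and $v$ is fixed, it suffices to verify the edge condition for pairs at least one of whose members is $u$ or $v$.

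Then I would carry out this verification directly using the neighbourhood computation above. Writing $N(u) = (K_n \cup \{g_n\}) \setminus \{u\}$ and $N(v) = (K_n \cup \{g_n\}) \setminus \{v\}$, the map $\sigma$ fixes $g_n$ and fixes $K_n$ setwise (merely interchanging $u$ and $v$ within it), and therefore sends $N(u)$ onto $N(v)$ and $N(v)$ onto $N(u)$. Hence $\{a,b\} \in E^{\Gamma^*}$ if and only if $\{\sigma(a),\sigma(b)\} \in E^{\Gamma^*}$ for all pairs, so $\sigma$ is an automorphism, and being a transposition it is nontrivial; thus $\Gamma^*$ is not rigid.

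The only point genuinely requiring care — rather than a real obstacle — is confirming that $u$ and $v$ have no edges outside their common clique, so that swapping them cannot disturb any edge of the $\Gamma$-part or of any other clique. As noted, this is immediate from the disjointness of the family $\{K_n\}_{n<\omega}$ together with the fact that the clique vertices do not belong to $\Gamma$. I expect no difficulty beyond this bookkeeping.
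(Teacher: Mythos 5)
Your proof is correct and is exactly the paper's argument: for $n \geq 2$ the transposition of two distinct vertices of $K_n$, fixing all other vertices, is a nontrivial automorphism of $\Gamma^*$. You merely spell out the neighbourhood verification that the paper leaves implicit.
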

\begin{proof}
for every $n \geq 2$ and for every two distinct  $a, b \in K_n$, there is an automorphism of $\Gamma^{*}$ swapping $a$ with $b$ and fixing all other vertices.
\end{proof}
\begin{lemma}\label{Id}
If $\sigma:\Ga \rightarrow \Ga$ is an automorphism of  \iGas then $\sigma \upharpoonright \G = Id_{\G}$
\end{lemma}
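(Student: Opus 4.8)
The plan is to exploit the one invariant that every graph automorphism must respect, namely the degree of a vertex, and to extract from the construction a numerical quantity that pins down each $g_n$ uniquely.

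First I would observe that the vertices of $\Gamma^*$ split cleanly into two classes by degree. Since $\Gamma$ is the random graph, every $g\in\Gamma$ already has infinitely many neighbours inside $\Gamma$, so $\deg_{\Gamma^*}(g)=\aleph_0$. By contrast, a vertex $a\in K_n$ is adjacent only to the other $n-1$ members of $K_n$ together with $g_n$, so $\deg_{\Gamma^*}(a)=n$ is finite. Hence the infinite-degree vertices of $\Gamma^*$ are exactly $|\Gamma|$ and the finite-degree vertices are exactly $\bigcup_{n<\omega}K_n$. As $\sigma$ preserves degree, it maps $|\Gamma|$ onto $|\Gamma|$; in particular $\sigma\upharpoonright\Gamma$ is a well-defined bijection (indeed an automorphism of the induced subgraph $\Gamma$), so $\sigma(g_n)\in\Gamma$ for every $n$.

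Next I would separate the individual $g_n$ using a finer invariant: the number of finite-degree neighbours. For $g_n$ the clique-edges incident to it run only to $K_n$, because $g_n$ lies in no other clique $K_m\cup\{g_m\}$, while all of its $\Gamma$-edges go to other $g_m$, which have infinite degree. Thus the set of finite-degree neighbours of $g_n$ is exactly $K_n$, of cardinality $n$. Since $\sigma$ preserves adjacency and the finite/infinite-degree dichotomy, it carries the finite-degree neighbours of $g_n$ bijectively onto those of $\sigma(g_n)$, so $\sigma(g_n)$ also has exactly $n$ finite-degree neighbours. But $g_m$ has exactly $m$ such neighbours and $m\mapsto m$ is injective, so $g_n$ is the unique vertex of $\Gamma$ with $n$ finite-degree neighbours. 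This forces $\sigma(g_n)=g_n$ for all $n$, i.e. $\sigma\upharpoonright\Gamma=\id_{\Gamma}$.

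The calculations are routine; the only step needing care is the first one, that no vertex of $\Gamma$ can be confused with a clique vertex. This rests on the standard fact that in the random graph every vertex has infinite degree, which is what produces the clean degree dichotomy. Once that is secured, the distinctness of the clique sizes $|K_n|=n$ does all the remaining work, so I do not expect any serious obstacle beyond recording these two observations precisely.
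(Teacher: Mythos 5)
Your proof is correct and follows essentially the same route as the paper's: both arguments rest on the degree dichotomy (clique vertices have finite degree $n$, vertices of $\Gamma$ have infinite degree) and on the distinctness of the clique sizes $|K_n|=n$ to pin down each $g_n$. The paper phrases it by noting that $K_n$ is exactly the set of vertices of degree $n$ and $g_n$ is the unique infinite-degree vertex adjacent to all of $K_n$, while you count finite-degree neighbours of $g_n$ directly; this is only a cosmetic difference.
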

\begin{proof}
For each $n\in \omega$, $K_n$ is the set of vertices in $\Gamma^*$ of degree precisely $n$, and $g_n$ is the unique vertex of infinite degree connected to all vertices in $K_n$. Thus  $\sigma(g_n) = g_n$.
\end{proof}
\begin{proposition}\label{gammaStarNotSymIndivisible}
$\Gamma^*$ is indivisible but not symmetrically indivisible.
\end{proposition}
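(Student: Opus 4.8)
The plan is to prove the two halves separately, since the statement asserts that $\Gamma^*$ is indivisible yet fails to be symmetrically indivisible.

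\textbf{Indivisibility.} First I would show $\Gamma^*$ is indivisible. Given a $2$-coloring $c$ of $|\Gamma^*|$, I want a monochromatic copy of $\Gamma^*$ inside $\Gamma^*$. The natural approach is to exploit the copy of $\Gamma$ sitting inside $\Gamma^*$ together with the indivisibility of $\Gamma$ (quoted above from \cite{Ra64}, \cite{Hen71}). Restricting $c$ to $|\Gamma|$ gives a coloring of the random graph, so by indivisibility of $\Gamma$ there is a monochromatic induced subgraph $\Gamma' \subseteq \Gamma$ with $\Gamma' \cong \Gamma$, say in color red. I would then try to rebuild the cliques: for each vertex $g'$ of $\Gamma'$ I need a red clique of the appropriate size attached to it and to nothing else, so that the union of $\Gamma'$ with these cliques is isomorphic to $\Gamma^*$ and monochromatic. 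The key point making this feasible is that in the construction each $K_n$ has size $n$, so the clique sizes are unbounded; and because $\Gamma' \cong \Gamma$ has the same extension properties as $\Gamma$, I can choose, for a re-enumeration $\{g'_n\}$ of $\Gamma'$, cliques $K'_n$ among the already-present red vertices (either from other cliques $K_m$ with $m$ large, or built afresh) realizing the required degree-$n$ adjacency pattern. I expect this bookkeeping — verifying that one can simultaneously find disjoint monochromatic cliques of every finite size correctly attached, so that the resulting subgraph is genuinely isomorphic to $\Gamma^*$ rather than merely containing a copy of $\Gamma$ — to be the part requiring the most care.

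\textbf{Failure of symmetric indivisibility.} For the negative half I would produce a coloring for which \emph{no} monochromatic copy of $\Gamma^*$ can be symmetrically embedded. The idea is to color the vertices so that any isomorphic copy $\Gamma' \cong \Gamma^*$ lying in one color class is forced to be rigid in a way that conflicts with \Cref{notRigid}. Here \Cref{Id} is the crucial tool: it says every automorphism of $\Gamma^*$ fixes the copy of $\Gamma$ pointwise, so the only nontrivial automorphisms of $\Gamma^*$ act by permuting within the cliques $K_n$. If I color the universe so that within each clique $K_n$ the two swappable vertices guaranteed by \Cref{notRigid} receive \emph{different} colors, then a monochromatic subcopy $\Gamma'$ can use at most one vertex from each such clique; an automorphism of $\Gamma'$ swapping two of its clique-vertices (which exists by \Cref{notRigid} applied to $\Gamma'$) would have to extend to an automorphism of $\Gamma^*$, but by \Cref{Id} that extension fixes the ambient $\Gamma$ and can only move vertices inside a single $K_n$ of $\Gamma^*$ — and there is no room to realize the required swap while respecting the coloring.

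\textbf{Main obstacle.} The delicate step is to choose the coloring so that it simultaneously (i) cannot destroy the existence of a monochromatic copy of $\Gamma^*$ (we are not claiming it does — indivisibility already grants such a copy), yet (ii) forces every such monochromatic copy to demand an automorphism that \Cref{Id} forbids us from extending. In other words the heart of the argument is to pin down precisely how a copy $\Gamma' \cong \Gamma^*$ must sit inside $\Gamma^*$, identify its distinguished vertices (its internal ``$\Gamma$-part'' and its clique-vertices, recovered via the degree characterization used in \Cref{Id}), and show that the nontrivial automorphism of $\Gamma'$ furnished by \Cref{notRigid} has no extension to $\Gamma^*$ compatible with the monochromatic constraint. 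I expect reconciling ``$\Gamma'$ is abstractly isomorphic to $\Gamma^*$, hence non-rigid'' with ``its embedding into $\Gamma^*$ leaves no symmetric freedom'' to be where the real work lies, and this is exactly the tension that \Cref{notRigid} and \Cref{Id} are set up to exploit.
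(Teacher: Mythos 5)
Your first half (indivisibility) is essentially correct but works too hard: once indivisibility of $\Gamma$ gives a monochromatic induced copy $\Gamma' \cong \Gamma$, there is no need to rebuild cliques attached in any particular way, because by universality of the random graph (\Cref{gammaEmbeds}) the copy $\Gamma'$ already contains an induced copy of \emph{every} countable graph, in particular of $\Gamma^*$, and that copy is automatically monochromatic. This is exactly how the paper argues, packaged as $\Gamma \lesssim \Gamma^* \lesssim \Gamma$, hence $\Gamma^* \sim \Gamma$, plus \Cref{equivResult}; the ``bookkeeping'' you flag as the main difficulty is a non-issue.

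The second half has a genuine gap. First, symmetric embedding only requires that each automorphism of the monochromatic copy extend to \emph{some} automorphism of the ambient $\Gamma^*$; the extension is under no obligation to preserve colors, so your concluding step (``no room to realize the required swap while respecting the coloring'') is a non sequitur. Second, the coloring you describe is not well defined: \emph{every} pair of vertices in $K_n$ is swappable, so ``the two swappable vertices receive different colors'' has no canonical meaning (and with two colors you cannot make all pairs bichromatic once $n \geq 3$), and you never specify the color of the vertices of $\Gamma$ itself; consequently your claim that a monochromatic copy ``can use at most one vertex from each clique'' is false. Worse, the natural instantiation --- split each $K_n$ between the two colors and color $\Gamma$, say, red --- actually fails: by \Cref{Id}, the automorphisms of $\Gamma^*$ are exactly the permutations acting within the ambient cliques $K_n$, so a red copy of $\Gamma^*$ \emph{is} symmetrically embedded as soon as each of its cliques lies wholly inside a single ambient $K_n$. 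Such a red copy exists: place its $m$-th clique inside the red half of a large ambient clique $K_{n_m}$ with attachment vertex $g_{n_m}$, choosing the indices $n_m \geq 2m$ inductively so that the $g_{n_m}$ realize the adjacency pattern of an enumeration of $\Gamma$ (the extension property of the random graph provides infinitely many candidates at each step); every automorphism of this copy is a product of permutations of its cliques and extends by the identity elsewhere. The paper's coloring avoids all of this by cutting along the degree structure: color $\Gamma$ red and all the cliques blue. Then the blue class has no vertex of infinite degree, so it contains no copy of $\Gamma^*$ at all, while any red copy lies inside $\Gamma$, which by \Cref{Id} is fixed pointwise by every automorphism of $\Gamma^*$, so the nontrivial automorphism supplied by \Cref{notRigid} cannot extend. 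The idea you are missing is that the coloring should annihilate one color class entirely and trap every copy in the other class inside the pointwise-rigid part $\Gamma$, rather than trying to constrain the extending automorphisms by colors.
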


\begin{proof}
First, \iGas is indivisible, since clearly $\G \lesssim \Ga$ and by Fact \ref{gammaEmbeds}, $G \lesssim_s \G $ for every countable graph $G$. In particular $\Ga \lesssim \G$, thus $\Ga \sim \G $ and we have \iGs is indivisible thus, by Proposition  \ref{equivResult}, \iGas is indivisible. 

To show \iGas is not symmetrically indivisible, let $c:\Ga \rightarrow \{\ red ,\ blue\ \}$ be  a colouring of \iGas defined by 
\[ c(x) = \twopartdef { red } {x \in \G} { blue } {x \not\in \G} \]
 Since there is no blue vertex of infinite degree there is no blue copy of $\Gamma$, and therefore also $\Gamma^*$ does not embed in the blue sub-graph. Let ${\Ga}^{ \prime}$ be a red substructure isomorphic to $\Ga$. By Lemmas  \ref{notRigid} and \ref{Id}, ${\Ga}^{ \prime}$ has an automorphism that cannot be extended to an automorphism of \iGa , and so  ${\Ga}^{ \prime}$ is not symmetrically embedded in \iGa .
\end{proof}

\subsection{Enumeration endowments}\ 

Throughout this subsection fix an ultrahomogeneous structure $U$ in a relational language $\cL$ satisfying the pigeonhole property  and a structure $U^{*}$ not symmetrically indivisible such that $U\lesssim U^{*}\lesssim U$. (For example, the random graph $\Gamma$ and $\Gamma^{*}$ defined above.)

Before we continue our construction, we give a general claim about the pigeonhole property:
\begin{lemma}\label{extendInf}
If $a_1,\dots, a_n, b \in U$ are distinct and\ $g:\{a_1, \dots, a_n\}\to U$ is a partial isomorphism, then the substructure whose universe is
\[ S: =\Set{x\in U | g \cup \langle b, x\rangle \text{ is a partial isomorphism}} \] is isomorphic to $U$, in particular, $S$ is infinite.
\end{lemma}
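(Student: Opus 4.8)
The plan is to recognize $S$ as the set of realizations of a single quantifier-free type and then play this description off against the pigeonhole property. First I would use ultrahomogeneity of $U$ to extend the partial isomorphism $g$ (between the finite substructures $\{a_1,\dots,a_n\}$ and its image) to an automorphism $\hat g$ of $U$, and set $b' := \hat g(b)$ and $\bar a' := (g(a_1),\dots,g(a_n))$. A short verification — composing the isomorphisms witnessing membership in $S$ with $\hat g$ and $\hat g^{-1}$ — shows that for $x\in U$ the map $g\cup\langle b,x\rangle$ is a partial isomorphism exactly when $x$ realizes $\mathrm{tp}^{\mathrm{qf}}(b'/\bar a')$; that is,
\[ S = \{\, x \in U \mid \mathrm{tp}^{\mathrm{qf}}(x/\bar a') = \mathrm{tp}^{\mathrm{qf}}(b'/\bar a') \,\}. \]
In particular $b'\in S$, so $S\neq\emptyset$, while each $g(a_i)\notin S$ (since sending both $a_i$ and $b$ to $g(a_i)$ violates injectivity), so $\bar a'\subseteq U\setminus S$.

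Next I would apply the pigeonhole property to the partition $U = S\sqcup(U\setminus S)$. As $\cL$ is relational, both $S$ and $R:=U\setminus S$ are induced substructures, so pigeonhole yields $S\cong U$ or $R\cong U$, and the whole proof reduces to ruling out the latter. Suppose towards a contradiction that $R\cong U$. Then $R$ is itself ultrahomogeneous and $\age(R)=\age(U)$. The finite substructure $\bar a'\cup\{b'\}$ lies in $\age(U)=\age(R)$, hence embeds into $R$; composing such an embedding with an automorphism of $R$ (furnished by ultrahomogeneity of $R$) that carries the image of $\bar a'$ back onto $\bar a'\subseteq R$, I obtain some $b''\in R$ with $\mathrm{tp}^{\mathrm{qf}}(b''/\bar a')=\mathrm{tp}^{\mathrm{qf}}(b'/\bar a')$. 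By the description of $S$ above this forces $b''\in S$, contradicting $b''\in R=U\setminus S$. Hence $R\not\cong U$, and therefore $S\cong U$; since $U$ is infinite, so is $S$.

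The step I expect to be the main obstacle — and the real content of the argument — is this last one. The naive thought that $R$ "omits the type of $b'$ over $\bar a'$" is \emph{not} preserved by an abstract isomorphism $R\cong U$, because such an isomorphism need not respect the parameters $\bar a'$; so one cannot conclude $R\not\cong U$ merely by a type-omission count, the way one can in the parameter-free case $n=0$. The resolution is to exploit that $R\cong U$ makes $R$ ultrahomogeneous with the full $\age(U)$, which forces $R$ to realize, over the parameters $\bar a'$ that happen to lie inside $R$, every finite configuration $U$ realizes — in particular $\mathrm{tp}^{\mathrm{qf}}(b'/\bar a')$ — precisely what $R=U\setminus S$ is forbidden to do. The reduction in the first paragraph (replacing $g,b$ by $\hat g, b'$) is what lets me treat $S$ cleanly as a type-realization set and is the first place ultrahomogeneity enters.
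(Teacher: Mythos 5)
Your proof is correct and follows essentially the same route as the paper's: apply the pigeonhole property to $U = S \sqcup (U\setminus S)$, then rule out $U\setminus S \cong U$ by showing that such an isomorphism would force $U\setminus S$ (being ultrahomogeneous with the full age of $U$, and containing the parameters $g(a_1),\dots,g(a_n)$) to realize the defining type of $S$ over those parameters, a contradiction. The paper phrases this via an explicit composition of partial isomorphisms $f\circ\phi\circ\widehat{g}$ rather than your normalization by a global automorphism $\hat g$ and quantifier-free types, but the argument is the same.
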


\begin{proof}
By ultrahomogeneity, $S$ is non-empty and since $a_1,\dots, a_n,b$ are distinct, $g(a_1),\dots, g(a_n)\notin S$. Let $s\in S$ and let $\widehat{g} = g\cup \{\langle b, s\rangle\}$.
By the pigeonhole property, either $S$ or $U\setminus S$ is isomorphic to $U$.
Assume towards a contradiction that there is an isomorphism $\phi:U\to U\setminus S$. Then 
\[\phi^{-1}\upharpoonright \Set{\phi\circ g(a_1),\dots, \phi\circ g(a_n)}\]
is a partial isomorphism of $U\setminus S$, and thus by ultrahomogeneity, there is a $y\in U\setminus S$ such that
\[f: = \left(\phi^{-1}\upharpoonright \Set{\phi\circ g(a_1),\dots, \phi\circ g(a_n)}\right) \cup \{\langle \phi(s),y \rangle\}\]
is a partial isomorphism. Now $f\circ \phi \circ \widehat{g}$ is a partial isomorphism extending $g$ and $f\circ \phi \circ \widehat{g} (b)\in U\setminus S$, contradicting the definition of $S$.
\end{proof}

\begin{definition}
For an $\cL$ structure $\cM$, we say an expansion of $\cM$ to $\cL \cup \{<\}$ is an enumeration endowment if $<$ is of order type $\omega$.

Note that two enumeration endowments of the same structure, even in the ultrahomogeneous context, are not necessarily isomorphic.
\end{definition}

\begin{definition}
Recall that for a first-order relational structure $\cM$, $\age(\cM)$ is the class of all finite structures which are embeddable in $\cM$.
\end{definition}

\begin{lemma}\label{endowmentAgeEmbeds}
Let $A$ be a countable structure with $\age(A)\subseteq \age(U)$. If $U^<$,  $A^<$ are enumeration endowments of $U,A$ respectively, then $A^<$ embeds into $U^<$ (as $\cL\cup \{<\}$-structures).

In particular, $U^<$ is indivisible.
\end{lemma}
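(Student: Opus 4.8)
The plan is to build the embedding $A^< \hookrightarrow U^<$ by a one-sided back-and-forth (a greedy construction), exploiting that both orders have type $\omega$. Enumerate the universe of $A$ in its increasing order as $a_0 <^A a_1 <^A \cdots$, and recall that $<^U$ likewise has type $\omega$. I would construct, by induction on $n$, partial $\cL$-isomorphisms $e_n \colon \{a_0,\dots,a_{n-1}\} \to U$ forming an increasing chain, subject to the extra constraint that the images be strictly $<^U$-increasing, i.e. $e_n(a_0) <^U e_n(a_1) <^U \cdots <^U e_n(a_{n-1})$. Taking $e = \bigcup_n e_n$ then yields an $\cL$-embedding $A \hookrightarrow U$ whose strict monotonicity makes it injective and order-preserving; since the chosen enumeration is $<^A$-increasing, $a_i <^A a_j$ forces $i<j$ and hence $e(a_i) <^U e(a_j)$, so $e$ is exactly an embedding of the endowment $A^<$ into $U^<$.

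The heart of the argument is the extension step, and this is where I expect the main friction, namely reconciling the shape of \Cref{extendInf} (whose partial isomorphism has domain inside $U$) with our need to extend a map whose domain lies in $A$. I would first produce a single legal value: since $\{a_0,\dots,a_n\}$ is a finite substructure of $A$, age-containment $\age(A)\subseteq\age(U)$ gives an embedding $h$ of it into $U$, and comparing $h\!\upharpoonright\!\{a_0,\dots,a_{n-1}\}$ with $e_n$ yields a partial automorphism of $U$ which, by ultrahomogeneity, extends to some $\sigma\in\aut(U)$; setting $v_0 := \sigma(h(a_n))$ makes $e_n \cup \{\langle a_n, v_0\rangle\}$ a partial isomorphism. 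Now the set $S'$ of all admissible images $x$ for $a_n$ is exactly the set of $x\in U$ realizing the same atomic type over $e_n[\{a_0,\dots,a_{n-1}\}]$ as $v_0$ does; that is, $S'$ is the set $S$ of \Cref{extendInf} applied with $g = \id$ on $e_n[\{a_0,\dots,a_{n-1}\}]$ and $b = v_0$. Hence $S' \cong U$, and in particular $S'$ is infinite. Since $<^U$ has type $\omega$, every infinite subset is cofinal, so $S'$ contains an element exceeding $e_n(a_{n-1})$; choosing $e_n(a_n)$ there preserves both the partial-isomorphism property and strict monotonicity, closing the induction.

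For the ``in particular'' clause I would argue that indivisibility of $U^<$ follows from the embedding statement together with the pigeonhole property of $U$. Given a two-coloring of the universe of $U^<$, the induced partition of $U$ into two $\cL$-substructures has, by the pigeonhole property, a monochromatic part $R$ with $R \cong U$. The order $<^U$ restricted to $R$ is a suborder of an $\omega$-order on the countably infinite set $R$, hence itself of type $\omega$, so $R^< := (R, <^U\!\upharpoonright\! R)$ is an enumeration endowment of a structure isomorphic to $U$. Applying the embedding result with $R$ in the role of the ambient ultrahomogeneous structure (it shares the age and all relevant properties of $U$) and with $A := U$, $A^< := U^<$, and using $\age(U)=\age(R)$, I obtain an embedding of $U^<$ into $R^<$; its image is an induced copy of $U^<$ lying entirely inside the monochromatic set $R$, witnessing the indivisibility of $U^<$.
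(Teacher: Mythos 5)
Your proposal is correct and takes essentially the same route as the paper: a greedy inductive construction of the embedding, using \Cref{extendInf} to guarantee infinitely many admissible images at each step so that one can always be chosen above the previous image in the $\omega$-order, and, for the ``in particular'' clause, passing to a monochromatic copy of $U$ whose induced order is again an enumeration endowment into which $U^<$ embeds. If anything, you are more careful than the paper at the one delicate point: the paper applies \Cref{extendInf} directly to $e_i$, whose domain lies in $A$ rather than $U$, whereas you first manufacture a single admissible value $v_0$ via age-containment and ultrahomogeneity and then invoke the lemma with $g=\id$ on $e_n[\{a_0,\dots,a_{n-1}\}]$ and $b=v_0$, which matches its hypotheses exactly.
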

\begin{proof}
Let $\langle u_i : i\in \omega\rangle, \langle a_i : i\in \omega\rangle $ be enumerations of $U$ and $A$ respectively, compatible with the given enumeration endowments.

We construct the embedding inductively:
\begin{itemize}
\item Since $\age(A)\subseteq \age(U)$, there is a $u\in U$ such that $\langle a_0, u\rangle$ is a partial isomorphism. Let $e_0:=\langle a_0, u\rangle $ for such a $u$.
\item By Lemma \ref{extendInf}, for every $i\in \omega$, 
\[ S: =\Set{x\in U | e_i \cup \langle a_{i+1}, x\rangle \text{ is a partial $\cL$-isomorphism}} \]
is infinite. Choose $u\in S$ such that $e(a_i)<u$ in the enumeration endowment and let $e_{i+1} = e_i \cup \langle a_{i+1}, u\rangle$.
\end{itemize}
Let \[e:=\bigcup_{i\in \omega}e_i .\]

By the construction $e$ is an ascending union of $\cL$-partial isomorphisms, so it is an $\cL$-embedding. Furthermore it is order preserving -- thus it is an $\cL \cup \{<\}$-embedding.
\bigskip

Now to show $U^<$ is indivisible, let $c:U\to \{red, blue\}$.
By indivisibility of $U$ as an $\cL$-structure, there is a monochromatic $U^{\prime}$ $\cL$-isomorphic to $U$. Let $(U')^{<}$ be the induced $\cL \cup \{<\}$-structure on $U^{\prime}$. So $(U')^<$ is an enumeration endowment of $U'$ (not necessarily isomorphic to $U^<$). By the present lemma, $U^<$ embeds into $(U^{\prime})^<$ and it is monochromatic.

\end{proof}

\begin{remark} Note that since $\langle\omega,<\rangle$ is rigid, every enumeration endowment is rigid as well, thus in the context of enumerated graphs, symmetric indivisibility and indivisibility coincide.
\end{remark}

\begin{theorem}\label{symNotReduct}
A reduct of a symmetrically indivisible structure to a sub-language is not necessarily symmetrically indivisible.
\end{theorem}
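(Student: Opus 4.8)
The plan is to exhibit a single structure that is symmetrically indivisible in a richer language $\cL \cup \{<\}$, but whose reduct to $\cL$ fails to be symmetrically indivisible. The natural candidate, given all the machinery developed in this subsection, is an enumeration endowment $(U^*)^<$ of the structure $U^*$ (concretely $\Gamma^*$ with an $\omega$-ordering of its universe). By \Cref{gammaStarNotSymIndivisible}, the reduct $\Gamma^*$ itself is \emph{not} symmetrically indivisible, which takes care of the reduct side for free; so the entire content of the proof is to verify that a suitable enumeration endowment of $U^*$ \emph{is} symmetrically indivisible in the language with the order.

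\textbf{Key steps.} First I would invoke the closing Remark: since $\langle \omega, < \rangle$ is rigid, any enumeration endowment is rigid, so every embedding of it into itself is automatically symmetric (the only automorphism is the identity, which trivially extends). This collapses symmetric indivisibility into ordinary indivisibility for enumeration endowments. Hence it suffices to show that some enumeration endowment $(U^*)^<$ is \emph{indivisible} as an $\cL \cup \{<\}$-structure. Second, I would want to apply \Cref{endowmentAgeEmbeds}, which shows that enumeration endowments $V^<$ of structures $V$ with $\age(V) \subseteq \age(U)$ are indivisible. The point to check is that $\age(U^*) \subseteq \age(U)$ — for the graph case this says every finite subgraph of $\Gamma^*$ embeds into $\Gamma$, which holds because $\Gamma$ is the random graph and embeds all finite graphs (indeed all finite structures whose type is realized). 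More precisely, since $U \lesssim U^* \lesssim U$ we have $\age(U^*) = \age(U)$, so the hypothesis of \Cref{endowmentAgeEmbeds} is met. Third, I would fix any enumeration endowment $(U^*)^<$ and conclude from \Cref{endowmentAgeEmbeds} that it is indivisible, hence by rigidity symmetrically indivisible. Finally, its reduct to $\cL$ is exactly $U^*$, which is not symmetrically indivisible by hypothesis (or by \Cref{gammaStarNotSymIndivisible} in the concrete case), completing the argument with $\cL_0 = \cL \subsetneq \cL \cup \{<\}$.

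\textbf{Main obstacle.} The delicate point is whether \Cref{endowmentAgeEmbeds} really applies to $U^*$ rather than to $U$. That lemma is stated for a countable structure $A$ with $\age(A) \subseteq \age(U)$ and produces indivisibility of the endowment of $A$; so I must be careful that $U^*$ satisfies its hypothesis. The condition $U \lesssim U^* \lesssim U$ guarantees $\age(U^*) = \age(U)$, which is precisely what is needed. A secondary subtlety is that indivisibility of $(U^*)^<$ in \Cref{endowmentAgeEmbeds} is proved by first extracting a monochromatic $\cL$-copy of $U$ and then embedding the endowment into it; I should make sure the target being colored is $U^*$ and that the extracted monochromatic piece is genuinely an enumeration endowment of $U$ into which $(U^*)^<$ embeds, which again rests on $\age(U^*) \subseteq \age(U)$. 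Once this bookkeeping is done, the combination of rigidity (forcing symmetry) with the age-based indivisibility lemma yields the theorem cleanly.
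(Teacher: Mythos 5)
Your proposal is correct and follows essentially the same strategy as the paper: take an enumeration endowment $(U^*)^<$, use rigidity to collapse symmetric indivisibility into plain indivisibility, apply \Cref{endowmentAgeEmbeds} with $A = U^*$ (which is legitimate since $U \lesssim U^* \lesssim U$ gives $\age(U^*) \subseteq \age(U)$), and note that the reduct to $\cL$ is $U^*$, which is not symmetrically indivisible. The one point that needs care is the one you half-flag yourself: your step 2 paraphrases \Cref{endowmentAgeEmbeds} as showing that ``enumeration endowments $V^<$ of structures $V$ with $\age(V)\subseteq\age(U)$ are indivisible,'' but the lemma does not say this, and that general statement is false --- for $V$ an infinite star we have $\age(V)\subseteq\age(\Gamma)$, yet $V$ (hence any $V^<$) is not even indivisible: color the center one color and the leaves the other. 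The lemma's ``in particular'' clause yields indivisibility only of $U^<$ itself; for $U^*$ it yields only the embedding $(U^*)^< \lesssim U^<$. Your ``main obstacle'' paragraph supplies the correct repair: color $U^*$, extract a monochromatic $\cL$-copy of $U$ (using $U \lesssim U^*$ together with indivisibility of $U$), observe that with the induced order it is an enumeration endowment of $U$, and embed $(U^*)^<$ into it by the lemma; with that, your argument is complete. The paper reaches the same conclusion slightly more economically: since $U^< \subseteq (U^*)^<$ trivially and $(U^*)^< \lesssim U^<$ by the lemma, rigidity makes both embeddings symmetric, so $(U^*)^< \sim_s U^<$, and Proposition \ref{equivResult}(2) then transfers symmetric indivisibility from $U^<$ (which the lemma does cover directly) to $(U^*)^<$, with no need to re-run the coloring argument for $U^*$.
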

\begin{proof}
Consider $U$ and $U^{*}$ above. Assume for simplicity $U\subseteq U^{*}$. Let $(U^{*})^<$ be an enumeration endowment of $U^{*}$ and let $U^<$ be the induced $\cL\cup \{<\}$-substructure on $U$.
Notice that $U^<$ is an enumeration endowment of $U$ and thus by Lemma  \ref{endowmentAgeEmbeds}, \[ (U^{*})^< \lesssim U^< \]
so by rigidity,
\[(U^{*})^< \lesssim_s U^<\lesssim_s (U^{*})^<\]
By Lemma  \ref{endowmentAgeEmbeds} and by rigidity $U^<$ is symmetrically indivisible and thus by Proposition  \ref{equivResult}, so is $(U^{*})^<$. But $ U^<\upharpoonright \cL = U^{*}$ is not symmetrically indivisible.
\end{proof}

\subsection*{Acknowledgement}
	The work in this paper is part of the author's M.Sc. thesis, prepared under the supervision of Assaf Hasson. The author would like to gratefully acknowledge him for presenting the question discussed in the paper, as well as the great help and support along the way. Thanks are also due to Menachem Kojman for his help in a preliminary version of the paper. The author was partially supported by an Israel Science Foundation grant number 1156/10.

\bibliographystyle{alpha}
\bibliography{manyref}

\begin{thebibliography}{HKO11}

\bibitem[Cam10]{Ca10}
Peter~J. Cameron.
\newblock The pigeonhole property.
\newblock \textit{Available at
  http://cameroncounts.wordpress.com/2010/10/11/the-pigeonhole-property/},
  2010.

\bibitem[EZS91]{ElSa91}
Mohamed~M. El-Zahar and Norbert~W. Sauer.
\newblock Ramsey-type properties of relational structures.
\newblock {\em Discrete Math.}, 94(1):1--10, 1991.

\bibitem[EZS93]{ElSa93}
Mohamed~M. El-Zahar and Norbert~W. Sauer.
\newblock On the divisibility of homogeneous directed graphs.
\newblock {\em Canad. J. Math.}, 45(2):284--294, 1993.

\bibitem[Fra00]{Fr00}
Roland Fra{\"{\i}}ss{\'e}.
\newblock {\em Theory of relations}, volume 145 of {\em Studies in Logic and
  the Foundations of Mathematics}.
\newblock North-Holland Publishing Co., Amsterdam, revised edition, 2000.
\newblock With an appendix by Norbert Sauer.

\bibitem[GK11]{GeKo11}
Stefan Geschke and Menachem Kojman.
\newblock Symmetrized induced {R}amsey theory.
\newblock {\em Graphs Combin.}, 27(6):851--864, 2011.

\bibitem[Hen71]{Hen71}
C.~Ward Henson.
\newblock A family of countable homogeneous graphs.
\newblock {\em Pacific J. Math.}, 38:69--83, 1971.

\bibitem[HKO11]{HKO11}
Assaf Hasson, Menachem Kojman, and Alf Onshuus.
\newblock On symmetric indivisibility of countable structures.
\newblock {\em Contemp. Math.}, 558(1):453--466, 2011.

\bibitem[KR86]{KoRo86}
P{\'e}ter Komj{\'a}th and Vojt{\v{e}}ch R{\"o}dl.
\newblock Coloring of universal graphs.
\newblock {\em Graphs Combin.}, 2(1):55--60, 1986.

\bibitem[Rad64]{Ra64}
Richard Rado.
\newblock Universal graphs and universal functions.
\newblock {\em Acta Arith.}, 9:331--340, 1964.

\bibitem[Sau14]{Sa14}
N.~Sauer.
\newblock Age and weak indivisibility.
\newblock {\em European Journal of Combinatorics}, 37(0):24 -- 31, 2014.
\newblock Relational structures, ordered sets and graphs.

\end{thebibliography}
\end{document}